\newtheorem{remark}{Remark}
\colorlet{Changes@Color}{red}
\definecolor{db}{RGB}{23,20,119}
\definecolor{dg}{RGB}{2,101,15}
\definecolor{plum}{rgb}{0.56, 0.27, 0.52} 
\newcommand{\ddbar}{\sqrt{-1}\, \partial\bar{\partial}}
\newcommand{\R}{\mathbb R}
\newcommand{\eps}{\varepsilon}
\renewcommand{\phi}{\varphi}
\newcommand{\pa}{\partial}
\title[]{Remarks on a result of Chen-Cheng}
\author{Zhiqin Lu}
\author{Reza Seyyedali}
\email[Zhiqin Lu]{zlu@uci.edu}
\email[Reza Seyyedali]{rezaseyyedali@gmail.com}
\date{October 12, 2023}
\newtheorem{theorem}{Theorem}[section]
\newtheorem{proposition}{Proposition}[section]
\newtheorem{lemma}{Lemma}[section]
\begin{document}
\maketitle

% ----------------------------------------------------------------
\begin{abstract}
In their seminal work (\cite{CC}, \cite{CC2}), Chen and Cheng proved apriori estimates for the constant scalar curvature metrics on compact K\"ahler manifolds. They also prove $C^{3,\alpha}$-estimate for the potential of the K\"ahler metrics under boundedness assumption on the scalar curvature and the entropy. 
The goal of this paper is to replace the uniform boundedness of the scalar curvature to the $L^p$-boundedness of the scalar curvature. 
\end{abstract}

\section{Introduction} 
%\added[id=zl]{I suggest change here.} \comment{comments here.}

A fundamental theorem in the realm of complex analysis is the Uniformization Theorem. One of the implications of the Uniformization Theorem is that every compact Riemann surface admits a metric with consistent Gaussian curvature. This principle can be extended in numerous ways to manifolds of higher dimensions. Within complex geometry, the aspiration is to discover canonical metrics on a Kähler manifold, those that align with the complex structure and exhibit curvature with specified characteristics. K\"ahler-Einstein metrics, constant scalar curvature K\"ahler metrics, and extremal metrics are prime examples of such metrics.

The   existence of K\"ahler-Einstein metrics on compact complex manifolds was proved  by Yau for manifolds with a trivial canonical class(\cite{Y1}, \cite{Y2}).  In the case of  negative first Chern classes, both Aubin and Yau independently affirmed the existence of K\"ahler-Einstein metrics(\cite{A}, \cite{Y1}, \cite{Y2}). However, the scenario is most challenging for Fano manifolds, where the first Chern class is positive, and there exist known barriers to the realization of K\"ahler-Einstein metrics. As conjectured by Yau, these barriers should all correlate to the stability of the manifolds.

The challenge concerning Fano manifolds was eventually overcome by Chen, Donaldson, Sun(\cite{CDS1}, \cite{CDS2}, \cite{CDS3}) , and Tian(\cite{T3}) a few years back. Regarding cscK metrics, the Yau-Tian-Donaldson Conjecture proposes that the presence of such metrics corresponds to a form of stability. The cscK metrics scenario is notably more intricate than that of K\"ahler-Einstein metrics, primarily because the constant scalar curvature equation is a fourth-order fully nonlinear elliptic PDE, while our understanding of fourth-order nonlinear PDEs is still limited. In contrast, the K\"ahler-Einstein equation is a second-order fully nonlinear elliptic PDE, a field that has been extensively explored over the years.

Progress in the constant scalar curvature equation had been stagnant until the recent breakthrough  of Chen-Cheng, (\cite{CC}, \cite{CC2}),  who established a priori estimates for cscK equations, providing significant insights that the Kähler potential and all its derivatives of a cscK metric can be  controlled in terms of the relative entropy.

Let $M$ be a K\"ahler manifold of dimension $n$ and $\omega$ be its K\"ahler form. For any K\"ahler potential $\phi,$ define $\omega_{\phi}=\omega + \ddbar \phi$. We consider the equations
 \begin{equation}{\label{eq1}}\omega_{\phi}^n=(\omega + \ddbar \phi)^n=e^F \omega ^n,\quad \sup_{M} \phi =0,\quad
\Delta _{\omega_{\phi}} F= -R+{\rm tr}_{\omega_{\phi}}\eta.\end{equation} where $R$ is the scalar curvature of the metric $\omega_{\phi}$, and $\eta$ is a fixed smooth $(1,1)$-form. The prototype of $\eta$ is the Ricci curvature ${\rm Ric} (\omega)$ of $\omega$. \\

In their papers (\cite{CC}, \cite{CC2}), Chen and Cheng   proved the following:

\begin{theorem}[\cite{CC}, \cite{CC2}]
For any $p \geq 1 ,$  there exists a constant $C$ depends on $n, p,  \omega, \eta, \|R\|_{\infty}$ and $\int_{M} e^F  \sqrt{1+F^2} \omega^n$ such that $||F||_{W^{2,p}},||\phi||_{W^{4,p}} \leq C.$ In particular, $F$ and $\phi$ are uniformly bounded in $\mathcal C^{1,\alpha}$ and $\mathcal C^{3,\alpha}$ respectively for any $\alpha \in (0,1).$
\end{theorem}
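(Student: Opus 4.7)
The plan is to follow the original four-stage strategy of Chen-Cheng: establish an $L^\infty$ bound on the K\"ahler potential $\phi$, then on the Monge-Amp\`ere density $F$, then a second-order estimate bounding $\tr_\omega \omega_\phi$, and finally bootstrap to the full $W^{2,p}$ and $W^{4,p}$ control using elliptic regularity.

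First I would prove $\|\phi\|_{L^\infty} \leq C$ using the normalization $\sup_M \phi = 0$, the Monge-Amp\`ere equation $\omega_\phi^n = e^F \omega^n$, and the entropy bound $\int_M e^F \sqrt{1+F^2}\,\omega^n \leq K$. A Kolodziej / Moser-type iteration converts the entropy into a uniform bound on $\phi$, since the entropy controls the $L^{p(\log L)}$ norm of $e^F$ for any $p$. For $F$, the supremum is controlled by applying the maximum principle and a Moser iteration to the linear equation $\Delta_{\omega_\phi} F = -R + \tr_{\omega_\phi} \eta$, using $\|R\|_\infty$ together with the elementary inequality $\tr_{\omega_\phi} \eta \leq C\, e^{-F/n} \tr_\omega \eta$, which follows from the arithmetic-geometric mean inequality applied to the eigenvalues of $\omega_\phi$ relative to $\omega$.

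The heart of the proof is the second-order estimate. Following Chen-Cheng, I would apply the maximum principle to an auxiliary function of the form
$$
u = \log \tr_\omega \omega_\phi \; - \; A \phi \; + \; \psi(F),
$$
where $A > 0$ is a large constant and $\psi$ is a convex function to be chosen. Computing $\Delta_{\omega_\phi} u$ using the Chern-Lu / Yau inequality for $\Delta_{\omega_\phi} \log \tr_\omega \omega_\phi$ produces a positive term $c_0 \, \tr_{\omega_\phi} \omega$ (after absorbing the bisectional curvature of $\omega$ into the choice of $A$), a harmful contribution from $-R + \tr_{\omega_\phi}\eta$ in the second equation, and a cross term of the form $-|\nabla F|^2_{\omega_\phi}$ coming from $\psi''(F)|\nabla F|^2_{\omega_\phi}$. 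The role of $\psi$ (typically modeled on $\sqrt{1+F^2}$ or an exponential) is to absorb the gradient term and to dominate the $-R$ contribution via $\|R\|_\infty$. At a maximum point of $u$, these cancellations force a pointwise bound on $\tr_\omega \omega_\phi$, which combined with the Monge-Amp\`ere equation gives a two-sided $L^\infty$ bound on $F$.

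With $\tr_\omega \omega_\phi$ and $F$ bounded, the complex Monge-Amp\`ere equation becomes uniformly elliptic and the second equation is a linear elliptic equation for $F$ with bounded measurable coefficients and $L^\infty$ right-hand side. The Calder\'on-Zygmund $L^p$ theory then yields $F \in W^{2,p}$ for any $p < \infty$, and Sobolev embedding gives $F \in C^{1,\alpha}$. Plugging this back into the Monge-Amp\`ere equation and invoking Schauder or $L^p$ theory for its linearization upgrades $\phi$ to $W^{4,p}$. The main obstacle is clearly the $C^2$ estimate: the delicate design of $\psi(F)$, simultaneously balancing the curvature of $\omega$, the entropy, and $\|R\|_\infty$, is precisely the technical insight of Chen-Cheng and the step that the present paper will need to revisit when $\|R\|_\infty$ is replaced by $\|R\|_{L^p}$.
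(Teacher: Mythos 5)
Your high-level architecture (zeroth order $\Rightarrow$ second order $\Rightarrow$ elliptic bootstrap) matches Chen--Cheng and this paper, but the two central analytic steps as you describe them do not work. First, the zeroth-order estimates. The inequality ${\rm tr}_{\omega_{\phi}}\eta \leq C e^{-F/n}{\rm tr}_{\omega}\eta$ is false: AM--GM gives the \emph{lower} bound ${\rm tr}_{\omega_{\phi}}\omega \geq n e^{-F/n}$, and there is no pointwise upper bound for ${\rm tr}_{\omega_{\phi}}\eta$ in terms of $F$ alone, since the eigenvalues of $\omega_\phi$ can degenerate in some directions and blow up in others while the determinant stays fixed. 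So the linear equation for $F$ does not have a controlled right-hand side and a direct maximum principle or Moser iteration on it cannot start. Likewise, the entropy only places $e^F$ in $L\log L$, which is not sufficient for Ko\l odziej's theorem when $n\geq 2$; the needed improvement $\int_M e^{(1+\delta)F}\leq C$ is itself a theorem (Theorem~\ref{newtheorem}) whose proof uses the device you omitted entirely: the auxiliary potentials solving $(\omega+\ddbar\psi)^n=A_F^{-n}e^F\Phi(F)\omega^n$ (and the analogue for $R$), whose $\omega_\phi$-Laplacians contribute, via Lemma~\ref{lemma21}, the term $\epsilon n A_F^{-1}\Phi(F)^{1/n}-{\rm tr}_{\omega_{\phi}}\omega$ to $\Delta_{\phi}(F+\epsilon\psi+\epsilon\rho-\lambda\phi)$. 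It is precisely the $\Phi(F)^{1/n}$ term that forces $F\leq C$ at the maximum point; without it the maximum principle only yields ${\rm tr}_{\omega_{\phi}}\omega\leq C$ there, i.e.\ a lower bound on $F$, not $\sup_M F\leq C$.

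Second, the $C^2$ estimate via a pointwise maximum principle applied to $\log {\rm tr}_{\omega}\omega_{\phi}-A\phi+\psi(F)$ does not close in the cscK setting. The Chern--Lu/Yau differential inequality produces the term $\Delta_{\omega}F/{\rm tr}_{\omega}\omega_{\phi}$, i.e.\ second derivatives of $F$ with respect to the \emph{background} metric, whereas the equation only controls $\Delta_{\omega_{\phi}}F=-R+{\rm tr}_{\omega_{\phi}}\eta$; no choice of $\psi(F)$ converts one Laplacian into the other without already having two-sided control of the metric. This is exactly the obstruction that makes the fourth-order problem hard, and the actual resolution (Sections 3 and 4 here, following \cite{CC}) is integral rather than pointwise: one first obtains $\|n+\Delta\phi\|_{L^p}$ for every $p$ by testing the differential inequality for $e^{-\alpha(F+\lambda\phi)}(n+\Delta\phi)$ against its own powers and integrating the $\Delta F$ term by parts (Theorem~\ref{thmW2p}), and then runs a Moser iteration on $u=e^{F/2}|\nabla_{\phi}F|^2_{\phi}+(n+\Delta\phi)+1$ using a Sobolev inequality whose constant depends on $\|n+\Delta\phi\|_{L^q}$ (Theorem~\ref{thmC2}). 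Your final bootstrap paragraph is correct once these two steps are repaired.
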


With some   modifications to the argument in \cite{CC} 
we slightly generalize the proceeding theorem. 
Namely, we replace the uniform bound on the scalar curvature with the $L^p$-bound for some $p>0$.   

Let $\Phi (t)=\sqrt{1+t^2}$. Define $A_F$ and $A_{R,p}$ by
\[
A_{F}^n= \int_{M} e^F \Phi(F)\,\omega ^n,\quad A_{R,p}^n= \int_{M} e^F\Phi(R)^{p}\omega ^n
\]
for $p>0$.  $A_F$ gives an upper bound for the entropy
\[
\int_M Fe^F\omega^n\leq A_F
\]
and $A_{R,p}$ gives an upper bound for the $L^p$-norm of $R$ with respect to $\omega_\phi$
\[
\left(\int_M |R|^p\omega_\phi^n\right)^{1/p}\leq A_{R,p}^{n/p}.
\]

The main results of this paper are the following Theorems.

\begin{theorem}{\label{mainthm1}}
For any $p>n, $ there exists  a constant $C$ depends on $n,p,\omega,$
 $A_F$, and  $A_{R,p}$ such that $||F||_{\infty} \leq C$ and $||\phi||_{\infty} \leq C.$

\end{theorem}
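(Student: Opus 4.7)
The plan is to adapt the Chen--Cheng strategy in a modular fashion that reflects the two equations in (\ref{eq1}): the $L^\infty$-bound on $\phi$ will be obtained from the entropy-type quantity $A_F$ alone, whereas the $L^\infty$-bound on $F$ will then use the $L^p$-integrability of $R$ encoded in $A_{R,p}$ as input, with the $\phi$-bound already in hand.

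For the $\phi$-estimate I would use only the first two conditions in (\ref{eq1}). The assumption $\int_M e^F\sqrt{1+F^2}\,\omega^n\le A_F^n$ dominates the Mabuchi-type entropy $\int_M F e^F\,\omega^n$. Combining this with Tian's $\alpha$-invariant bound $\int_M e^{-\alpha\phi}\,\omega^n\le C(\alpha,\omega)$, valid for some $\alpha>0$ depending on $\omega$, yields a Moser--Trudinger-style control on $\phi$. The corresponding portion of the Chen--Cheng argument uses no pointwise information about $R$, so it carries over verbatim and gives $\|\phi\|_\infty\le C(n,\omega,A_F)$ after the normalization $\sup_M\phi=0$.

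For the $F$-estimate I would turn to the third equation $\Delta_{\omega_\phi}F=-R+\mathrm{tr}_{\omega_\phi}\eta$ and replace the uniform bound on $R$ used by Chen--Cheng with the $L^p(\omega_\phi)$-bound
\[
\bigl(\textstyle\int_M|R|^p\,\omega_\phi^n\bigr)^{1/p}\le A_{R,p}^{n/p},
\]
which follows from $\omega_\phi^n=e^F\omega^n$. The key dimensional observation is that the underlying real dimension is $2n$, so the $L^p$-to-$L^\infty$ threshold for a linear second-order elliptic equation in Moser iteration (or in ABP/Alexandrov form) is precisely $p>n$, matching the hypothesis. Concretely, I would introduce an auxiliary function such as $u=F+\lambda\phi$, or an exponential variant $e^{a(F-\lambda\phi)}$, with $\lambda$ chosen large enough so that the indefinite term $\mathrm{tr}_{\omega_\phi}\eta$ is absorbed into $-\lambda\,\mathrm{tr}_{\omega_\phi}\omega$. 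The resulting differential inequality would then be tested against powers of $u^+$ and integrated by parts on $(M,\omega_\phi)$, with H\"older's inequality used to pair $R$ against an $L^{p/(p-1)}$-quantity. The already-established bound on $\phi$ together with the fixed Sobolev inequality of the background $\omega$ provides a weighted Sobolev inequality on $(M,\omega_\phi)$ strong enough to close the iteration. A matching lower bound $F\ge -C$ then follows either from a symmetric iteration applied to $-F$, or directly from the mass constraint $\int_M e^F\omega^n=\int_M\omega^n$ combined with the upper bound just proved.

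The main obstacle is the Moser iteration step itself. The original Chen--Cheng argument can treat $R$ as a bounded constant and pull it out of integrals, whereas in the $L^p$-setting one must carry $R$ through each stage of the iteration using H\"older and verify that the sequence of norm constants does not blow up. This convergence holds precisely when $p>n$, in exact parallel to the classical De Giorgi--Nash--Moser threshold on real $2n$-dimensional Riemannian manifolds, and fails at the endpoint $p=n$; controlling how the $\phi$-bound interacts with the weight $e^F$ in the Sobolev inequality for $\omega_\phi$ is the delicate technical point on which the whole argument hinges.
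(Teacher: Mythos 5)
Your proposal diverges from the paper's argument in ways that create genuine gaps. First, the claim that $\|\phi\|_\infty\le C(n,\omega,A_F)$ follows ``verbatim'' from the entropy bound plus the $\alpha$-invariant is not substantiated: $A_F$ only gives $e^F\in L^1(\log L)^1$, which for $n\ge 2$ is below the Or\-licz-type threshold $L^1(\log L)^p$, $p>n$, needed in Ko\l odziej's theorem, and the $\alpha$-invariant controls $\int e^{-\alpha\phi}$, not $\sup|\phi|$. The paper does \emph{not} bound $\phi$ from $A_F$ alone; it first proves the integrability improvement $\int_M e^{(1+\delta)F}\le C$ (Theorem~\ref{newtheorem}), whose proof already requires the auxiliary potential $\rho$ built from $A_{R,p}$ to absorb the $-R$ term via $\epsilon A_{R,p}^{-1}\Phi(R)^{p/n}-R\ge -C$ (this is where $p>n$ enters --- superlinear growth of $\Phi(R)^{p/n}$ in $|R|$ --- not as a De Giorgi--Nash--Moser exponent), and only then invokes Ko\l odziej to bound $\phi$, $\psi$, and $\rho$.

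Second, the proposed Moser iteration for $F$ on $(M,\omega_\phi)$ is circular at this stage: the Sobolev inequality for $\omega_\phi$ carries a constant controlled by $\|n+\Delta\phi\|_{(1-\epsilon)/\epsilon}$ (Lemma~\ref{sobolev}), and that quantity is only estimated in Section~3 \emph{using} $\|F\|_\infty$ as input. The paper sidesteps iteration entirely: once $\psi,\rho,\phi$ are uniformly bounded, the classical maximum principle applied to $u=F+\psi+\rho-\lambda\phi$ gives $0\ge\Delta_\phi u(x_0)\ge \epsilon n A_F^{-1}\Phi(F)^{1/n}(x_0)-C$ at the maximum point, hence $F(x_0)\le C$ and then $F\le C$ everywhere; the lower bound comes from the same argument with $u'=-F+\psi+\rho-\lambda\phi$. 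Finally, your fallback for the lower bound --- deducing $F\ge -C$ from $\int_M e^F\omega^n=\int_M\omega^n$ together with the upper bound --- is false: the mass constraint permits $F\to-\infty$ on sets of small measure.
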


\begin{theorem}{\label{mainthm2}}
Let $n=\dim M$. Then 
there exist $p_{n}>2n$ depends only on    $n$  such that  $||F||_{W^{2,p_{n}}} \leq C$ and $||\phi||_{W^{4,p_{n}}} \leq C$  for a constant $C$ depending on $n,\omega,$ $\eta$,   $A_F$, and  $A_{R,p_n}$.
 
Moreover, for any $p \geq p_{n},$ there exists a constant $C_{p}$ depends on $n,\omega,$ $\eta$,  $A_F$, and  $A_{R,p}$  such that $||F||_{W^{2,p}} \leq C_{p}$ and $||\phi||_{W^{4,p}} \leq C_{p}.$

\end{theorem}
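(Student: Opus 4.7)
The plan is to upgrade the $L^\infty$ bounds of Theorem \ref{mainthm1} to the higher Sobolev bounds asserted here by adapting Chen-Cheng's gradient and Laplacian estimates to the $L^p$-scalar-curvature setting, and then invoking Calderón-Zygmund theory together with complex Monge-Ampère regularity. I would carry out the argument in three steps.

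\emph{Step 1 (gradient estimate for $F$).} Starting from $\|F\|_\infty,\|\phi\|_\infty \le C$ provided by Theorem \ref{mainthm1}, I would apply the Bochner-type identity to $F$ together with the auxiliary equation $\Delta_{\omega_\phi}F = -R + \mathrm{tr}_{\omega_\phi}\eta$. This yields a differential inequality for $u := |\nabla F|^2_{\omega_\phi}$ whose forcing term involves $|R|$ and the curvature of $\omega$. In place of $\|R\|_\infty$, I would run a Moser iteration against the volume form $\omega_\phi^n = e^F\omega^n$, using the $L^p$-integrability of $R$ to close the iteration; this gives $\|\,|\nabla F|\,\|_\infty \le C$.

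\emph{Step 2 (Laplacian estimate for $\phi$).} Next, I would adapt the Chen-Cheng $C^2$-estimate. The Aubin-Yau/Chern-Lu inequality for $\log \mathrm{tr}_\omega\omega_\phi - \lambda\phi$ produces bad terms involving $R$, $|\nabla F|$, and the bisectional curvature of $\omega$. Rather than applying the pointwise maximum principle (which would require $R \in L^\infty$), I would instead run an integral Moser iteration, using the bound on $|\nabla F|$ from Step 1 and the $L^p$-bound on $R$. The output is $\mathrm{tr}_\omega\omega_\phi \le C$, which together with $\|F\|_\infty \le C$ (controlling $\det g_\phi/\det g$) yields two-sided comparability $C^{-1}\omega \le \omega_\phi \le C\omega$.

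\emph{Step 3 (Calderón-Zygmund and Monge-Ampère).} With the metrics now comparable, $\Delta_{\omega_\phi}$ is uniformly elliptic with bounded measurable coefficients. Applying the $L^p$-Calderón-Zygmund inequality to $\Delta_{\omega_\phi}F = -R + \mathrm{tr}_{\omega_\phi}\eta$, with $R \in L^{p_n}$, yields $F \in W^{2,p_n}$; since $p_n > 2n$, Sobolev embedding gives $F \in C^{1,\alpha}$, hence $e^F \in C^{1,\alpha}$. Caffarelli's $W^{4,p}$-regularity for the complex Monge-Ampère equation $\omega_\phi^n = e^F\omega^n$ then produces $\phi \in W^{4,p_n}$. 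The upgrade to arbitrary $p \ge p_n$ follows by iteration: once $\omega_\phi$ is Hölder-controlled, Calderón-Zygmund yields $F \in W^{2,p}$ and Monge-Ampère yields $\phi \in W^{4,p}$ directly from the hypothesis $A_{R,p} < \infty$.

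The hardest step is Step 2. Chen-Cheng's original argument relies on a pointwise maximum principle, and thus on $\|R\|_\infty$. Replacing this by an integral Moser iteration, while controlling the bad terms generated by $|\nabla F|^2$ and $|R|$ against only a Sobolev inequality for $\omega_\phi$ (whose constant is controlled through $\|F\|_\infty$), is the delicate technical point, and this is precisely what fixes the threshold $p_n > 2n$ as the smallest exponent for which the iteration converges.
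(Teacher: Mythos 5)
Your overall architecture (use Theorem~\ref{mainthm1}, then integral/Moser estimates on second derivatives, then an elliptic bootstrap) is the right one, but two of your steps have genuine gaps.

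First, the ordering of Steps 1 and 2 does not work. To run a Moser iteration against $\omega_\phi^n$ you need a Sobolev inequality for the metric $\omega_\phi$, and the only available one (Lemma~\ref{sobolev}) has a constant controlled by $\|n+\Delta\phi\|_{\frac{1-\eps}{\eps}}$ --- exactly the quantity you propose to estimate \emph{after} the gradient bound. Moreover, the Bochner-type inequality for $|\nabla_\phi F|^2_\phi$ produces a bad term of size $(n+\Delta\phi)^{n-1}\,|\nabla_\phi F|^2_\phi$ (this is the $-C(n+\Delta\phi)^{n-1}u$ term in \eqref{4-1}), so a standalone iteration for $|\nabla F|^2$ cannot close without a priori control of $n+\Delta\phi$. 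This is precisely why the paper (following Chen--Cheng) proceeds in the opposite order: Section~3 first establishes an $L^p$ bound on $n+\Delta\phi$ for every $p$ (via Yau's inequality applied to $e^{-\alpha(F+\lambda\phi)}(n+\Delta\phi)$, preceded by an $L^{2p}$ gradient estimate for $\phi$, not for $F$), and only then runs a single Moser iteration in Section~4 on the \emph{coupled} quantity $u=e^{F/2}|\nabla_\phi F|^2_\phi+(n+\Delta\phi)+1$, so that the good term $(n+\Delta\phi)^{1+\frac{1}{n-1}}$ absorbs the bad ones. You need either this coupling or some substitute for it; as written, Step~1 is circular.

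Second, in Step 3 you apply Calder\'on--Zygmund to $\Delta_{\omega_\phi}F=-R+{\rm tr}_{\omega_\phi}\eta$ with coefficients that are only ``bounded measurable.'' The $W^{2,p}$ Calder\'on--Zygmund estimate for large $p$ requires (at least VMO, in practice H\"older) continuity of the coefficients $g_\phi^{i\bar j}$; with merely measurable coefficients it fails. The paper fills this in by first applying De Giorgi--Nash--Moser to get $F\in C^\alpha$, then Wang's $C^{2,\alpha}$ regularity for the complex Monge--Amp\`ere equation to get $\phi\in C^{2,\alpha}$ (hence H\"older coefficients), and only then Calder\'on--Zygmund for $F\in W^{2,p}$ followed by differentiating the Monge--Amp\`ere equation for $\phi\in W^{4,p}$. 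Your appeal to ``Caffarelli's $W^{4,p}$ regularity'' skips this intermediate H\"older step, which is where the actual work lies. Finally, the threshold $p_n$ in the paper is not the Sobolev exponent $2n$ but the much larger value $\frac{4(n-1)^3(n+1)}{n}$ forced by the exponents $\|R\|_{2\theta^*}$ and $\|n+\Delta\phi\|_{(2n-2)\theta^*}$ appearing in the iteration, so your closing heuristic about what ``fixes the threshold'' is also off.
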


Note that in Theorem~\ref{mainthm2}, $W^{2,p}$ and $W^{4,p}$ are optimal regularity for $\phi$ and $F$ respectively, because 
of ~\eqref{eq1} and the fact that $R$ is $L^p$ for some $p>0$.

Theorem \ref{mainthm2}   gives apriori $\mathcal C^{3,\alpha}$ and $\mathcal C^{1,\alpha}$ estimate for $\phi$ and $F$ respectively for some $\alpha=\alpha(p,n) \in (0,1)$ by Sobolev Embedding Theorem.\\

%\begin{cor}Assume that the Calabi flow $\omega_{t}$ exists on the interval $[0,T)$ and $\int_{M} |R(\omega_t)|^p \omega_t^n$ is uniformly %bounded for some $p >2n$. Then,  we can extend the flow beyond $T$.\end{cor}
%As a corollary of the results in \cite{CH}, it follows from that the Calabi flow %extends as long as the $L^p-$ norm of the scalar curvature with respect to %the evolving metric is bounded for some $p>2n.$
%fact that $\omega_{t}$ is uniformly bounded in $C^{1,\alpha}$ for some $\alpha > 0.$ Then one can apply the short time existence in \cite{CH}.

The paper is organized as follows.
In Section $2$, we prove Theorem \ref{mainthm1}.  Our argument does not use  the Alexandrov maximum principle and the cut-off function as in Chen-Cheng~\cite{CC} and \cite{CC2}. Instead, we use Ko\l odziej's Theorem to prove the boundedness of the auxiliary functions.
We then prove the result using the classical maximum principle.

In Section $3$, we prove that there is an  
$L^p$-estimate of $n+\Delta\phi$.
The $\mathcal C^2$ estimate is obtained in Section $4$ using Moser iteration.
The arguments in Sections $3$ and $4$ are essentially the same as those in \cite{CC2}.

Throughout this paper, we shall use 
$\int_M f$ to denote $\int_M f\omega^n$, where $\omega$ is the background metric of the manifold. We use $\|f\|_p$ to denote the $L^p$-norm of function $f$ with respect to the background metric $\omega$.

\section{Proof of Theorem \ref{mainthm1}}
The section's main goal is to prove a uniform estimate for $\phi$ and $F$. 
This section's constant $C$  depends on $n=\dim M$, $\omega$, and $\eta$, which may differ line by line.

 \begin{lemma}\label{lemma21}
Let $h:M \to \R$ be a positive function and $\phi$ and $ \nu $ be  K\"ahler potentials such that 
$$(\omega +\ddbar \phi)^n = e^F \omega^n,$$
$$(\omega +\ddbar \nu)^n = e^F h^n \omega^n.$$ Then $\Delta_{\phi} \nu \geq n h-{\rm tr}_{\omega_{\phi}}(\omega).$ Here $\omega_{\phi}=\omega +\ddbar \phi$ and $\Delta_{\phi}$ is the Laplacian with respect to the metric $\omega _ {\phi}.$
\end{lemma}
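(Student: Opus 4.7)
The plan is to reduce the inequality to a pointwise arithmetic-geometric mean comparison between the two Kähler forms $\omega_\phi$ and $\omega_\nu$. First I would rewrite the Laplacian intrinsically: since $\omega_\nu - \omega = \ddbar \nu$, one has
\[
\Delta_\phi \nu = \tr_{\omega_\phi}(\ddbar \nu) = \tr_{\omega_\phi}(\omega_\nu) - \tr_{\omega_\phi}(\omega).
\]
This converts the problem into showing $\tr_{\omega_\phi}(\omega_\nu) \geq nh$.

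Next, at a fixed point $x \in M$, I would diagonalize $\omega_\nu$ with respect to the positive form $\omega_\phi$, obtaining eigenvalues $\lambda_1, \dots, \lambda_n > 0$ with $\tr_{\omega_\phi}(\omega_\nu) = \sum_i \lambda_i$ and $\omega_\nu^n / \omega_\phi^n = \lambda_1 \cdots \lambda_n$. The standard AM-GM inequality then yields
\[
\tr_{\omega_\phi}(\omega_\nu) = \sum_i \lambda_i \; \geq \; n \left( \prod_i \lambda_i \right)^{1/n} = n \left( \frac{\omega_\nu^n}{\omega_\phi^n} \right)^{1/n}.
\]

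Finally I would identify the volume ratio using the two Monge–Ampère equations in the hypothesis:
\[
\frac{\omega_\nu^n}{\omega_\phi^n} = \frac{e^F h^n \omega^n}{e^F \omega^n} = h^n,
\]
so $(\omega_\nu^n/\omega_\phi^n)^{1/n} = h$ since $h > 0$. Combining the three displayed identities gives $\Delta_\phi \nu \geq nh - \tr_{\omega_\phi}(\omega)$, as desired. There is no real obstacle here; the only thing to be careful about is that $h > 0$ guarantees the $n$-th root is unambiguously equal to $h$ (not $|h|$ or some other branch), and that the simultaneous diagonalization is pointwise, which is standard for two positive Hermitian $(1,1)$-forms.
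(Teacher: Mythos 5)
Your proof is correct and is exactly the argument the paper intends: the paper's one-line proof says to apply AM--GM to $\tr_{\omega_\phi}(\omega+\ddbar\nu)$, which is precisely your pointwise diagonalization of $\omega_\nu$ against $\omega_\phi$ followed by AM--GM and the identification $\omega_\nu^n/\omega_\phi^n=h^n$. Nothing to add.
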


\begin{proof}
 This follows by applying the AM-GM inequality to ${\rm tr}_{\omega_{\phi}}(\omega +\ddbar \nu)$.

\end{proof}

Let  $\alpha=\alpha(M,\omega)$ be the $\alpha$-invariant of $(M,\omega).$ By definition, for any smooth function  $\varphi: M \to \R$ such that $\omega+\ddbar \varphi >0,$ we have $$\int_M e^{-\frac 12\alpha(\varphi-\sup_{M} \varphi)} \omega^n \leq C$$
for some $C>0$ inepedent to $\phi$.

\begin{theorem}\label{newtheorem}
For any $p >n, $ there exists $\delta_{0}=\delta_{0}$ depending on $n,p,\omega, \eta, A_F, \|R\|_p$ such that 
for any $\delta < \delta_{0}$, we have

$$\int_{M}e^{\left( 1+\delta\right) F} \leq C,$$
	where $C=C(n,p,\delta_0,\omega, \eta, A_{F}, ||R||_{p})$.
\end{theorem}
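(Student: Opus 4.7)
The strategy combines the auxiliary K\"ahler potential $\nu$ from Lemma~\ref{lemma21}, the classical maximum principle applied to a carefully chosen linear combination of $\nu$, $\phi$ and $F$, and the $\alpha$-invariant of $(M,\omega)$.

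For $\delta>0$ to be fixed small, let $V := \int_M\omega^n$, set $c^n := V^{-1}\int_M e^{(1+\delta)F}\omega^n$ and $h := e^{\delta F/n}/c$, so that $\int_M e^F h^n\,\omega^n = V$. By Yau's theorem there is a K\"ahler potential $\nu$ with $(\omega+\ddbar\nu)^n = e^F h^n\omega^n$, normalized by $\sup_M\nu = 0$. Since bounding $c$ is equivalent to bounding $\int_M e^{(1+\delta)F}\omega^n$, the latter is the quantity we wish to control. Fix $C_0\geq 0$ with $\eta\geq -C_0\omega$, let $\lambda>0$ be a parameter to be chosen, and consider
$$u := \nu - (1+\lambda C_0)\phi + \lambda F.$$
Combining Lemma~\ref{lemma21}, the identity $\Delta_\phi\phi = n-\tr_{\omega_\phi}\omega$, and the scalar curvature equation from \eqref{eq1}, the coefficients are calibrated precisely so that the $\tr_{\omega_\phi}\omega$ contributions cancel, yielding
$$\Delta_\phi u \geq nh - (1+\lambda C_0)n - \lambda R.$$

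At $x_0:=\mathrm{argmax}_M u$ the classical maximum principle forces $nh(x_0)\leq (1+\lambda C_0)n + \lambda R(x_0)$, from which (via $h = e^{\delta F/n}/c$) a pointwise upper bound on $F(x_0)$ follows. Moreover, for every $y\in M$, $u(y)\leq u(x_0)$ combined with $\phi(y)\leq 0$ gives the pointwise comparison $\lambda F(y) \leq u(x_0) - \nu(y)$. Choosing $\lambda = 2(1+\delta)/\alpha(M,\omega)$, exponentiating the previous display and integrating, then invoking the $\alpha$-invariant for $\nu$ (using $\sup\nu=0$), we obtain
$$\int_M e^{(1+\delta)F}\omega^n \leq C_\alpha\, e^{(1+\delta)u(x_0)/\lambda}.$$
It remains to bound $u(x_0) = \nu(x_0)-(1+\lambda C_0)\phi(x_0)+\lambda F(x_0)$. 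Here $\nu(x_0)\leq 0$ is immediate; $\lambda F(x_0)$ is controlled by the maximum principle output, with the pointwise term $\lambda R(x_0)/n$ traded for an averaged quantity controlled by $\|R\|_p$ through H\"older's inequality (this is where $p>n$ enters); and the $-\phi(x_0)$ contribution is absorbed via the $\alpha$-invariant applied to $\phi$.

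The principal obstacle is closing the resulting loop. The normalization $c$ is defined in terms of the very integral $\int_M e^{(1+\delta)F}\omega^n$ we aim to estimate, so the argument is an implicit bootstrap. Simultaneously, the pointwise term $R(x_0)$ from the maximum principle cannot be controlled directly by $\|R\|_p$, and must be absorbed into an integrated quantity using the condition $p>n$, which provides the crucial slack in a H\"older step. Choosing $\delta<\delta_0$ small enough ensures that the resulting algebraic inequality for $\int_M e^{(1+\delta)F}\omega^n$ closes in the favorable direction; the threshold $\delta_0=\delta_0(n,p,\omega,\eta,A_F,\|R\|_p)$ is precisely the value at which this closure fails.
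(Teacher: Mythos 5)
There is a genuine gap here, and you have in fact named it yourself without resolving it: the pointwise maximum principle is the wrong tool for this statement, because at the maximum point $x_0$ it produces the pointwise quantities $R(x_0)$ and $-\phi(x_0)$, neither of which is controlled by the data of the theorem. The $\alpha$-invariant is an \emph{integral} statement ($\int_M e^{-\alpha'\phi}\leq C$); it gives no upper bound on $-\phi(x_0)$ at a single point, and a bound on $\|\phi\|_\infty$ is not yet available at this stage (it is deduced later, from this very theorem, via Ko\l odziej). Likewise $\|R\|_p$ gives no control on $R(x_0)$, and the promised ``H\"older step providing crucial slack'' is never exhibited; there is no integrated quantity in your argument into which $R(x_0)$ could be absorbed, since everything after the maximum principle is evaluated at the one point $x_0$. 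Finally, the normalization $c^n=V^{-1}\int_M e^{(1+\delta)F}$ makes the argument circular, and the claimed ``favorable closure for small $\delta$'' is asserted rather than derived: the inequality you would obtain has the form $c^{\,n(1+\delta)/\delta}\lesssim$ (something involving $c$ and $R(x_0)$), and without the two missing controls above it cannot be closed.

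The paper's proof avoids all three problems by not localizing at a point. It introduces \emph{two} auxiliary potentials, $\psi$ with $(\omega+\ddbar\psi)^n=A_F^{-n}\Phi(F)\,\omega_\phi^n$ and, crucially, $\rho$ with $(\omega+\ddbar\rho)^n=A_{R,p}^{-n}\Phi(R)^{p}\,\omega_\phi^n$; Lemma~\ref{lemma21} then yields $\Delta_\phi\rho\geq A_{R,p}^{-1}\Phi(R)^{p/n}-\tr_{\omega_\phi}\omega$, and the hypothesis $p>n$ is used \emph{pointwise} via Young's inequality to get $\epsilon A_{R,p}^{-1}\Phi(R)^{p/n}-R\geq -C$, which is exactly the mechanism that absorbs the scalar curvature term. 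The resulting differential inequality $\Delta_\phi e^{\delta u}\geq \delta e^{\delta u}(\hat\Phi(F)-C)$ is then \emph{integrated over $M$} (so only $\int e^{\delta u}(\hat\Phi(F)-C)\,\omega_\phi^n\leq 0$ is used, never a pointwise evaluation), the manifold is split into the regions where $\hat\Phi(F)$ is large or small, and the $\alpha$-invariant is applied in integral form to $\phi$ and to $(\psi+\rho)/2$. If you want to salvage your approach, the essential missing idea to import is the second auxiliary potential built from $\Phi(R)^p$ together with the decision to integrate the differential inequality rather than evaluate it at a maximum.
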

\begin{proof}
	
For a fixed $p>n,$ We define functions
$\psi$ and $\rho$ as  the solutions of the following:
\begin{align}&{\label{eq2}}(\omega + \ddbar \psi)^n=A_{F}^{-n}e^F \Phi(F)\omega ^n= A_{F}^{-n}  \Phi(F)\omega_\phi ^n,\,\,\,  \sup_{M} \psi=0;\\
	&{\label{eq3}}(\omega + \ddbar \rho)^n=A_{R,p}^{-n}e^F \Phi(R)^{p}\omega ^n=A_{R,p}^{-n} \Phi(R)^{p}\omega_\phi ^n ,\,\,\,  \sup_{M} \rho=0.\end{align}
	
	Let $0<\epsilon \leq 1$ and $u=F+\epsilon \psi+\epsilon \rho -\lambda \phi=v-\lambda \phi$, where $v=F+\epsilon \psi+\epsilon\rho$. Let $\delta>0$. Then by Lemma~\ref{lemma21}, we have 
	\begin{align}\label{basic-1}
		\begin{split}
			&e^{-\delta u}\Delta_{\phi}(  e^{\delta u})\geq \delta   \Delta_{\phi} u   \\
			& \geq \delta   ( -R+{\rm tr}_{\omega_{\phi}}\eta  )+ \epsilon \delta   (nA_{F}^{-1}\Phi(F)^{\frac{1}{n}}-{\rm tr}_{\omega_{\phi}}\omega)
			\\&+ \epsilon \delta   (A_{R,p}^{-1}\Phi(R)^{\frac{p}{n}}-{\rm tr}_{\omega_{\phi}}\omega)-n\delta \lambda  +\delta  \lambda  \,{\rm tr}_{\omega_{\phi}}\omega\\
			& \geq \delta  ( -R+\epsilon n A_{F}^{-1}\Phi(F)^{\frac{1}{n}}+\epsilon A_{R,p}^{-1}\Phi(R)^{\frac{p}{n}}-\lambda n).
		\end{split}
	\end{align}
	The last inequality holds since $\epsilon \leq 1$ and $\lambda= |\eta|_{\omega}+2.$
	
	Let $$\delta_{0}= \lambda^{-1}\min(\alpha,1),$$
	 where $\alpha=\alpha(M,[\omega])$ is the $\alpha$-invariant of $M$.
	We choose $0<\delta<\frac 12 \delta_{0}.$  Fixing $\delta$, we choose $\epsilon>0$ small so that 
	\[
	2(1+\delta )\cdot\epsilon<\min(\alpha,1). 
	\]
	Let 
	\[
	\hat\Phi(F)=\epsilon n A_F^{-1}\Phi(F)^{1/n}.
	\]
	Then  $$\epsilon A_{R,p}^{-1}\Phi(R)^{\frac{p}{n}}-R\geq -C(\epsilon,p),$$ since $A_{R,p}$ is bounded and $p>n$. Therefore, ~\eqref{basic-1} implies that 
	\begin{equation}\label{basic-2}
		\Delta_{\phi} e^{\delta u}\geq \delta e^{\delta u}(\hat\Phi(F)-C)
	\end{equation}
	for some constant $C>0$.
	As a result, we have 
	\[
	\int_M e^{\delta u}(\hat\Phi(F)-C)\omega_\phi^n\leq 0.
	\]
	We let
	\begin{align*}
		&E_1=\{x\mid \hat\Phi(F)-C\geq 1\};\\
		& E_2=\{x\mid \hat\Phi(F)-C< 1\}.
	\end{align*}
	On $E_2$, $F$ is bounded, say    $F\leq C$.  Thus  we have
	\[
	\int_{E_1} e^{\delta u+ F}\leq \int_{E_1}e^{\delta u} (\hat\Phi(F)-C)\omega_\phi^n
	\leq -\int_{E_2} e^{\delta u}(\hat\Phi(F)-C) \omega_\phi^n.
	\]
	Since $\hat\Phi(F)$ is  nonnegative, and on $E_2$, we have   $u\leq C-\lambda \phi$, we have
	\[
	\int_{E_1} e^{\delta u+F}\leq   C\int_{E_2} e^{-\delta\lambda\phi}\leq C\int_{M} e^{-\delta\lambda\phi}\leq C,
	\]
	since $\delta\lambda$ is less than half of the $\alpha$-invariant.  By definition of $u$, we have 
	\[
	\int_{E_1} e^{(1+\delta) F+\epsilon \delta (\psi+\rho)}\leq\int_{E_1} e^{\delta u+F}\leq C.
	\]
	Since 
	\[
	\omega+\sqrt{-1}\pa\bar\pa \frac{\psi+\rho}{2}>0,
	\]
	using the H\"older inequality, we have 
	\[
	\begin{split}
		&
		\int_{E_1} e^{(1+\delta/2)F}=\int_{E_1}e^{(1+\delta/2)F+\frac{(1+\delta/2 )}{1+\delta}\epsilon\delta  (\psi+\rho)}\cdot e^{-\frac{ (1+\delta/2)}{1+\delta}\epsilon\delta  (\psi+\rho)}\\
		&\leq
		\left(\int_{E_1}e^{(1+\delta)F+ \epsilon\delta  (\psi+\rho)}\right)^{\frac{1+\delta/2}{1+\delta}}
		\cdot \left(\int_{E_1}e^{-\frac{1+\delta/2}{ \delta/2}\epsilon\delta  (\psi+\rho)}\right)^{\frac{ \delta/2}{1+\delta}}\leq C,
	\end{split}
	\]
	since $\frac{1+\delta/2}{ \delta/2}\epsilon\delta$ is less than hal  f of the $\alpha$-invariant.  
	Combining the above with the fact that $F$ is bounded on $E_2$,   we    have 
	\[
	\int_M e^{(1+\delta/2) F}\leq C.
	\]
	
\end{proof}

 The following proof  of Theorem~\ref{mainthm1} is slightly different from that of Chen-Cheng (\cite{CC}). 
 
\begin{proof} [Proof of Theorem \ref{mainthm1}]
As in ~\eqref{eq2}, ~\eqref{eq3}, we define functions
  $\psi$ and $\rho$ as  the solutions of the following:
\begin{align}&{\label{eq2-2}}(\omega + \ddbar \psi)^n=A_{F}^{-n}e^F \Phi(F)\omega ^n= A_{F}^{-n}  \Phi(F)\omega_\phi ^n,\,\,\,  \sup_{M} \psi=0;\\
&{\label{eq3-3}}(\omega + \ddbar \rho)^n=A_{R,p'}^{-n}e^F \Phi(R)^{p'}\omega ^n=A_{R,p'}^{-n} \Phi(R)^{p'}\omega_\phi ^n ,\,\,\,  \sup_{M} \rho=0,\end{align}
where $p'=(p+1)/2$.

 We shall use the result of Ko\l odziej~\cite{Ko} to prove that the functions $\phi,\psi,\rho$ are uniformly bounded. 
 
 That $\phi$ is bounded directly follows from Theorem \ref{newtheorem} and Ko\l odziej's Theorem. 

Since $x^{1+\delta} e^{-x}\leq C$ for any real number $x>0$,   for $\delta <\delta_{0}/2,$ we have $$\int_M \Phi(F)^{1+\delta}e^{\left( 1+\delta\right)F }\leq C(n,p,\delta_0,\omega, \eta, A_{F}, ||R||_{p}).$$
Hence, Ko\l odziej's Theorem implies that $\psi$ is uniformly bounded.

Finally,  we prove that  $\rho$ is uniformly bounded.
Let $0<\sigma < \delta <\delta_{0}/2$ and $a=1+\sigma.$
We have 

\begin{align}
	\begin{split}
	&\int_M \Phi(R)^{a p' }e^{aF}=\int_M \Phi(R)^{a p' }e^{\sigma F} \omega_\phi ^n
	\\& \leq \left(\int_M \Phi(R)^{ap'\frac{\delta}{\delta-\sigma} } \omega_\phi ^n \right)^{\frac{\delta-\sigma}{\delta}} \left(\int_M e^{\delta F}\omega_\phi^n\right)^\frac{\delta}{\sigma}\\& 
	\leq C  \left(\int_M \Phi(R)^{a p'\frac{\delta}{\delta-\sigma} } \omega_\phi ^n \right)^{\frac{\delta-\sigma}{\delta}}.
		\end{split}
\end{align}
The first inequality follows from H\"older inequality and the last inequality follows from Theorem \ref{newtheorem}.
Now choose $\sigma $ sufficiently small such that 
$ap'\frac{\delta}{\delta-\sigma}< p.$ Therefore, H\"older inequality implies that 

$$\int_M \Phi(R)^{a p' }e^{aF}\leq C\int_M \Phi(R)^{p }\omega_{\phi}^n.$$
This,  together with Ko\l odziej's Theorem implies that $||\rho||_{\infty}\leq C=C(n,\omega, \eta A_{F},A_{R,n+1}).$

Let  $u=F+  \psi+\rho -\lambda \phi$. Then we have
\[
\Delta_\phi u\geq R+\epsilon n A_{F}^{-1}\Phi(F)^{\frac{1}{n}}+\epsilon A_{R}^{-1}\Phi(R)^{\frac{n+1}{n}}-C\geq \epsilon n A_{F}^{-1}\Phi(F)^{\frac{1}{n}}-C.
\]
Let $x_0$ be a maximum point of $u$. Then by the above,
\[
F(x_0)\leq C.
\]
As a result, for any $x\in M$, we have 
\[
u(x)\leq u(x_0)=F(x_0)+\psi(x_0)+\rho(x_0)-\lambda \phi(x_0)\leq C.
\]
This implies that $F(x)\leq C$. 

Now let 
$u'=-F+  \psi+\rho -\lambda \phi$. Then by a similarly computation, we have
\[
\Delta_\phi u'\geq \epsilon n A_{F}^{-1}\Phi(F)^{\frac{1}{n}}-C.
\]
The same argument would imply that $F\geq -C.$  This completes the proof of the theorem.
 \end{proof}

\section{$W^{2,p}$ estimate}
In this section, we prove that for any $p>0$, $n+\Delta\phi$, where $\phi$ is the solution of Equations~\ref{eq1}, is in $L^p(M)$.

 This section's constants $C$   and $C_i$ depend on $n=\dim M$, $p>0$,   $\omega$, and $\eta$, which may differ line by line.

\begin{theorem}\label{thmW2p}
Let $\gamma=\frac{n}{n-1} $ and $p$ be a positive number. %Assume that $\|\phi\|_{L^\infty}$ and $\|F\|_{L^\infty}$ are %bounded. If for some $p$, we have \[
%\|\nabla\phi\|_{L^{2+\frac {4p}{\gamma}}}+\|R\|_{L^(n-1)
%(2p+\gamma)}\leq C,\]
Then
\[
\int_M (n+\Delta\phi)^p\leq C.
\]
 where $C$ depends on $n,p, \omega, \eta, \|\phi\|_{\infty}, \|F\|_{\infty}$ and $\|R\|_{\frac{(n-1)p}{\gamma}}.$
\end{theorem}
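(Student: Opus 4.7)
The plan is to adapt the Moser iteration of Chen--Cheng \cite{CC2} by replacing the pointwise bound on $R$ with H\"older's inequality. Write $u = n+\Delta\phi$. First I would combine Yau's classical second-order computation for $\Delta_{\omega_\phi}\log\mathrm{tr}_\omega\omega_\phi$ with the identity $\mathrm{Ric}(\omega_\phi) = \mathrm{Ric}(\omega) - \ddbar F$ and the equation $\Delta_{\omega_\phi} F = -R + \mathrm{tr}_{\omega_\phi}\eta$ from \eqref{eq1} to obtain a pointwise inequality of the form
$$\Delta_\phi \log u \geq -C_0(|R|+1) - C_0\,\mathrm{tr}_{\omega_\phi}\omega,$$
where $C_0 = C_0(n,\omega,\eta,\|F\|_\infty)$. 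I would then introduce the auxiliary function $w = e^{-\Lambda\phi}u$ with $\Lambda > 2C_0$. Since $\Delta_\phi(-\Lambda\phi) = -\Lambda n + \Lambda\,\mathrm{tr}_{\omega_\phi}\omega$, this choice turns the unfavorable trace term into a positive one, giving (using $\Delta_\phi w \geq w\Delta_\phi\log w$)
$$\Delta_\phi w \geq c_0\,w\,\mathrm{tr}_{\omega_\phi}\omega - C_1(|R|+1)\,w.$$

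Next I would test this differential inequality against $w^{p-1}$ and integrate against $\omega_\phi^n = e^F\omega^n$. Integration by parts produces a Dirichlet energy $\int |\nabla_\phi w^{p/2}|_\phi^2\,\omega_\phi^n$ together with the positive trace integral $c_0\int w^p\,\mathrm{tr}_{\omega_\phi}\omega\,\omega_\phi^n$ on the left, balanced against $\int(|R|+1)w^p\,\omega_\phi^n$ on the right. Boundedness of $\|F\|_\infty$ makes the measures $\omega^n$ and $\omega_\phi^n$ uniformly equivalent, and the pointwise metric comparison $|\nabla_\omega f|_\omega^2 \leq u\,|\nabla_\phi f|_\phi^2$ allows me to transfer the Dirichlet term to the background metric at the cost of a factor of $u$. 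Applying the Sobolev embedding on the real $2n$-dimensional manifold $(M,\omega)$ at exponent $2\gamma = 2n/(n-1)$ then yields a recursive inequality of the schematic form
$$\|w\|_{L^{p\gamma}(\omega^n)}^p \lesssim p^2\Bigl(\int|R|\,w^p\,\omega^n + \int w^p\,\omega^n\Bigr),$$
and H\"older applied to the $R$-integral with exponents tuned so that the $w$-factor appears in $L^{p\gamma}$ gives a clean bootstrap $p \mapsto p\gamma$.

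The base case is $p=1$, where $\int u\,\omega^n = n\int_M \omega_\phi\wedge\omega^{n-1}$ is a cohomological constant. Iterating the recursion a finite number of times reaches any prescribed $p$, and the H\"older exponent can be chosen uniformly so that every iteration only requires the single norm $\|R\|_{(n-1)p/\gamma}$ appearing in the hypothesis; the factor $n-1$ is the Sobolev-exponent accounting on a real $2n$-manifold, and the $1/\gamma$ is the price paid for the metric-comparison factor of $u$ that must be absorbed.

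The main difficulty will be the bookkeeping of these exponents: I will need to verify that (a) the factor of $u$ introduced by the gradient comparison really is absorbed by the $\mathrm{tr}_{\omega_\phi}\omega$ term on the left, for which a Newton--Maclaurin-type lower bound such as $u\cdot\mathrm{tr}_{\omega_\phi}\omega \geq n^2$ is the key input; (b) the gradient cross-term arising when passing between $\Delta_\phi\log w$ and $\Delta_\phi w$ has a sign amenable to absorption; and (c) a single exponent, namely $(n-1)p/\gamma$, dominates the H\"older exponents at every stage of the Moser iteration so that the final constant depends only on $\|R\|_{(n-1)p/\gamma}$ as stated.
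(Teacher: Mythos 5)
Your proposal diverges from the paper's proof in a way that introduces two genuine gaps. The paper does \emph{not} run a Moser iteration here: the gain in integrability comes pointwise, from the AM--GM bound $\mathrm{tr}_{\omega_\phi}\omega\geq c\,e^{-F/(n-1)}(n+\Delta\phi)^{1/(n-1)}$, which (with the weight $e^{-\alpha(F+\lambda\phi)}$ and $\lambda\alpha$ large) turns the favorable trace term into $C_1u^{\gamma}$ in the differential inequality; multiplying by $u^p$, integrating $\Delta_\phi u^{p+1}\geq\cdots$ over the closed manifold, and absorbing $|R|u^{p+1}$ by Young's inequality then gives $\int u^{p+\gamma}$ in one shot, with no Sobolev inequality and no recursion in $p$.

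The first gap is your claimed pointwise inequality $\Delta_\phi\log u\geq -C_0(|R|+1)-C_0\,\mathrm{tr}_{\omega_\phi}\omega$. Yau's computation produces the term $\dfrac{-\mathrm{tr}_\omega\mathrm{Ric}(\omega_\phi)}{\mathrm{tr}_\omega\omega_\phi}=\dfrac{\Delta_\omega F-R_\omega}{n+\Delta\phi}$, and it is $\Delta_\omega F$, not $\Delta_{\omega_\phi}F$, that appears; equation~\eqref{eq1} only controls the latter, and there is no pointwise comparison between the two traces of $\sqrt{-1}\partial\bar\partial F$ without a sign condition. This is precisely why the paper carries the term $e^{-\alpha(F+\lambda\phi)}\Delta F\,u^p$ explicitly and integrates it by parts, using the large parameter $\alpha$ in the weight $e^{-\alpha F}$ to generate the good term $\alpha\int u^p|\nabla F|^2$ that absorbs the resulting cross terms (and, as a by-product, the term $\int u^p|\nabla\phi|^2$, which forces the separate gradient estimate of Proposition~\ref{gradientestimate}). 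Your weight $e^{-\Lambda\phi}$ contains no $F$, so this mechanism is unavailable and the second-order term in $F$ is simply unaccounted for. The second gap is point (a) of your own list: transferring the Dirichlet energy to the background metric costs the factor $u$ inside $\int u\,w^{p-2}|\nabla_\phi w|^2_\phi$, and the Newton--Maclaurin bound $u\cdot\mathrm{tr}_{\omega_\phi}\omega\geq n^2$ cannot move that factor onto the trace integral, whose integrand is $w^p\,\mathrm{tr}_{\omega_\phi}\omega$ --- these are different integrands and no pointwise inequality trades one for the other. Splitting the factor of $u$ off by H\"older is exactly what Lemma~\ref{sobolev} does in Section~4, but it requires an a priori $L^{(1-\epsilon)/\epsilon}$ bound on $n+\Delta\phi$, i.e.\ the conclusion of the present theorem; as the engine of the proof of Theorem~\ref{thmW2p} itself, this step is circular. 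If you replace the Sobolev step by the pointwise AM--GM gain from the trace term, you recover the paper's argument.
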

To prove Theorem \ref{thmW2p}, we first prove the following gradient estimate.

\begin{proposition}\label{gradientestimate}

For any $p \geq  1$,  there exists a constant $C$ depends on  $n,p, \omega$, $\eta$, $\|\phi\|_{\infty}, \|F\|_{\infty}$, and $\|R\|_{(n-1)p}$ such that
\[
\|\nabla\phi\|_{{2p}} \leq c_1+c_2\|R\|^{(n-1)/2}_{(n-1)p}.
\]

\end{proposition}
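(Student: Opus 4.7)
The plan is to perform integration by parts on $\int_M|\nabla\phi|^{2p}_\omega\,\omega^n$, invoke Yau's trace inequality ${\rm tr}_\omega\omega_\phi\leq C\,e^F({\rm tr}_{\omega_\phi}\omega)^{n-1}$ (which is the origin of the exponent $(n-1)$ appearing on $R$), and then couple the resulting integral to the scalar curvature through the equation $\Delta_{\omega_\phi}F = -R+{\rm tr}_{\omega_\phi}\eta$. This follows the Chen--Cheng strategy from \cite{CC2} adapted to our setting, using the uniform $L^\infty$-bounds $\|\phi\|_\infty,\|F\|_\infty\leq C$ obtained in Theorem~\ref{mainthm1}.

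First I would write
\[
\int_M|\nabla\phi|^{2p}_\omega\,\omega^n = \int_M|\nabla\phi|^{2(p-1)}_\omega\, g^{i\bar j}\phi_i\phi_{\bar j}\,\omega^n
\]
and integrate by parts to transfer a derivative off $\phi_i$. This produces a main term $-\int\phi\,|\nabla\phi|^{2(p-1)}_\omega\,\Delta\phi\,\omega^n$ together with a cross term involving $\partial_i|\nabla\phi|^2$; the latter is absorbed by a combination of Cauchy--Schwarz and Young's inequality. Using $\Delta\phi = {\rm tr}_\omega\omega_\phi - n$ and $\|\phi\|_\infty\leq C$ one obtains
\[
\int_M|\nabla\phi|^{2p}_\omega\,\omega^n \leq C + C\int_M {\rm tr}_\omega\omega_\phi\cdot|\nabla\phi|^{2(p-1)}_\omega\,\omega^n.
\]
Applying Yau's trace inequality together with $\|F\|_\infty\leq C$ replaces ${\rm tr}_\omega\omega_\phi$ by $C({\rm tr}_{\omega_\phi}\omega)^{n-1}$, and $\omega^n = e^{-F}\omega_\phi^n \leq C\omega_\phi^n$. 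H\"older's inequality with conjugate exponents $p$ and $p/(p-1)$ then splits the integral; after absorbing a small multiple of $\int|\nabla\phi|^{2p}_\omega\omega^n$ on the left via Young's inequality, one is left with
\[
\int_M|\nabla\phi|^{2p}_\omega\,\omega^n \leq C + C\int_M({\rm tr}_{\omega_\phi}\omega)^{(n-1)p}\,\omega_\phi^n.
\]

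Second, I would control the right-hand side using the scalar curvature equation. The identity ${\rm tr}_{\omega_\phi}\omega = n-\Delta_{\omega_\phi}\phi$ allows integration by parts in the $\omega_\phi$-metric against a test function built from $({\rm tr}_{\omega_\phi}\omega)^{(n-1)p-1}$; the Chen--Cheng equation $\Delta_{\omega_\phi}F = -R+{\rm tr}_{\omega_\phi}\eta$, combined with $|{\rm tr}_{\omega_\phi}\eta|\leq C\,{\rm tr}_{\omega_\phi}\omega$, then introduces $|R|$ on the right. The upshot is a bound of the form
\[
\int_M({\rm tr}_{\omega_\phi}\omega)^{(n-1)p}\,\omega_\phi^n \leq C + C\int_M|R|^{(n-1)p}\,\omega^n.
\]
Combining with the first step and taking the $2p$-th root yields the claimed inequality.

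The main obstacle is the second step: iterating integration by parts on $({\rm tr}_{\omega_\phi}\omega)^{(n-1)p}$ generates third-derivative terms $\nabla {\rm tr}_{\omega_\phi}\omega$ that must be carefully controlled, and the H\"older exponents must be tracked so that $|R|$ lands exactly in $L^{(n-1)p}$ while the complementary factors are absorbed by the already-obtained bounds. The appearance of the precise exponent $(n-1)$ on $R$ is ultimately forced by Yau's trace inequality together with the Monge--Amp\`ere equation.
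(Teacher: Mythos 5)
Your proposal has two genuine gaps, and it is structurally different from (and weaker than) the paper's argument. First, in your opening integration by parts, the cross term coming from differentiating $|\nabla\phi|^{2(p-1)}_\omega$ is not harmless for $p>1$: $\partial_{\bar j}|\nabla\phi|^2$ contains $g^{k\bar l}\phi_k\phi_{\bar l\bar j}$, i.e.\ the anti-holomorphic part of the Hessian of $\phi$, which is controlled neither by ${\rm tr}_\omega\omega_\phi$ nor by the Monge--Amp\`ere equation. There is no positive term available to absorb it via Cauchy--Schwarz and Young, so the reduction to $\int {\rm tr}_\omega\omega_\phi\cdot|\nabla\phi|^{2(p-1)}$ does not go through (this is precisely why $W^{1,2p}$ bounds for $p>1$ do not follow from naive integration by parts in this subject). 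Second, your step two--- the bound $\int({\rm tr}_{\omega_\phi}\omega)^{(n-1)p}\omega_\phi^n\leq C+C\int|R|^{(n-1)p}$ --- is essentially the second-order $L^p$ estimate, i.e.\ the content of Theorem~\ref{thmW2p}. In the paper that theorem is proved \emph{using} Proposition~\ref{gradientestimate} (the term $\int e^{-\alpha(F+\lambda\phi)}\Delta F\,u^p$ is integrated by parts and the resulting $\int u^p|\nabla\phi|^2$ is controlled by the gradient estimate), so your route is circular within the paper's architecture; and as a standalone claim it is far from a routine integration by parts --- it requires the full Aubin--Yau weight $e^{-\alpha(F+\lambda\phi)}$ and a careful treatment of the third-derivative terms, which you acknowledge but do not supply.

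The paper's proof avoids both problems by working with the weighted quantity $u=e^{-(F+\lambda\phi)+\frac12\phi^2}(|\nabla\phi|^2+K)$ and invoking the Chen--Cheng pointwise differential inequality $\Delta_\phi u\geq Cu^{n/(n-1)}-(c+|R|)u$ (where the bad Hessian terms have already been absorbed by the choice of weight). Multiplying by $u^p$, integrating against $\omega_\phi^n$ so that $\int_M\Delta_\phi u^{p+1}\,\omega_\phi^n=0$, and applying Young's inequality $|R|u^{p+1}\leq|R|^{(n-1)(p+\gamma)}+u^{p+\gamma}$ gives the $L^{p+\gamma}$ bound on $u$ directly, with no second-order input. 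If you want to salvage your approach, you would need to either import inequality (2.31) of Chen--Cheng as the paper does, or reproduce its Bochner-type computation; the elementary integration by parts you describe cannot replace it.
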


\begin{proof}
Let
\[
u=e^{-(F+\lambda\phi)+\frac 12\phi^2}(|\nabla\phi|^2+K),
\]
where $K$ is an absolute constant (for example, we can take $K=10$). Then we have  
\[
\Delta_\phi u\geq C u^{\frac{n}{n-1}} -(c+|R|)u
\]
by ~\cite{CC}*{page 918, equation (2.31)}, where $C,c$ are positive constants depending on $n,p, \omega, \eta, \|\phi\|_{\infty}, \|F\|_{\infty}$. Let $p>0$ and let $\gamma$ be defined in Theorem~\ref{thmW2p}. Then we have
\[
\frac{1}{p+1}\Delta_\phi u^{p+1}=u^p\Delta_\phi u+ p u^{p-1} |\nabla_\phi u|^2
\geq u^p\Delta_\phi u\geq Cu^{p+ \gamma} -(c+|R|) u^{p+1}.
\]
Using   Young's inequality $|R| u^{p+1}\leq |R|^{(p+\gamma)(n-1)}+u^{p+\gamma}$, we have 
\[
\frac{1}{p+1}\Delta_\phi u^{p+1}\geq C u^{p+\gamma}-C_1-C_2 |R|^{(n-1)(p+\gamma)}.
\]
Integrating the above inequality to the volume form $\omega_\phi^n$, we have
\[
C\int_M u^{p+ \gamma}\omega_\phi^n\leq C_1+C_2 \int_M |R|^{(n-1)(p+\gamma)}.
\]
Since $F$ is bounded, $\omega_\phi^n$ and $\omega^n$ are equivalent. 
Thus we have 
\[
\|u\|_{L^{p+\gamma}}\leq c_1+c_2\|R\|_{(n-1)(p+\gamma)}^{n-1}.
\]
The proposition follows by replacing $p+\gamma$ to $p$.
\end{proof}

\begin{proof}[Proof of Theorem \ref{thmW2p}]
Let  
$\alpha$ be a big constant depending on $p$ only, and  to be determined later. Let $\lambda$  be a constant depending on $M$. Let
\[
u=e^{-\alpha(F+\lambda\phi)} (n+\Delta\phi).
\]
By Yau's estimate, we have 
\[
\begin{split}
&
\Delta_\phi u\geq e^{-(\alpha+\frac{1}{n-1})F-\alpha\lambda\phi}\left(\frac{\lambda\alpha}{2}-C\right)(n+\Delta\phi)^{1+\frac{1}{n-1}}\\
&-\lambda\alpha n e^{-\alpha(F+\lambda\phi)}(n+\Delta\phi)+ \alpha  e^{-\alpha(F+\lambda\phi)}R(n+\Delta\phi)
+  e^{-\alpha(F+\lambda\phi)}(\Delta F-R_\omega),
\end{split}
\]
where $R_\omega$ is the scalar curvature of the metric $\omega$. 
%\begin{enumerate}
%\item $\|F\|_{L^\infty}$ and $\|\phi\|_{L^\infty}$ are bounded; %\\[-1.6ex]
%\item $\frac{\lambda\alpha}{2}-C\geq \frac{\lambda\alpha}{4}$;%\\[-1.6ex]
%\item we fix $p\gg 0$ and choose $\lambda\gg p$ and therefore we %can safely drop all constants. 
%\end{enumerate}
By choosing $\lambda$ big enough such that $\frac{\lambda\alpha}{2}-C\geq \frac{\lambda\alpha}{4},$ 
  we have
\[
\Delta_\phi u \geq C_1 u^\gamma-C_2|R|u+e^{-\alpha(F+\lambda\phi)}\Delta F-C_3.
\]
We then have 
\[
\begin{split}
&
\frac{1}{p+1}\Delta_\phi u^{p+1}=u^p\Delta_\phi u+p u^{p-1}|\nabla_\phi u|^2\\
&\geq
p u^{p-1}|\nabla_\phi u|^2+
C_1 u^{p+\gamma}-C_2|R|^{(p+\gamma)(n-1)}+ e^{-\alpha (F+\lambda\phi)}\Delta F \,u^p-C_3,
\end{split}
\]
where we used Young's inequality.
Integrating the above to the volume form $\omega_\phi^n$ and using the fact that $F$ is bounded and $R$ is in $L^{(p+\gamma)(n-1)}$, we have 
\begin{equation}\label{p1}
C_1\int_M u^{p+\gamma} +p \int_M u^{p-1}|\nabla_\phi u|^2 \leq C_3-\int_M e^{-\alpha (F+\lambda\phi)}\Delta F\,u^p.
\end{equation}
Using integration by parts and the fact that both $F,\phi$ are bounded, we have
\begin{equation}\label{p2}
\begin{split}
& -\int_M e^{-\alpha (F+\lambda\phi)}\Delta F u^p \\
&\leq -C-4\alpha \int_Mu^p |\nabla F|^2+C_5\int_M u^{p-1}|\nabla F|(|\nabla u|+ \alpha u|\nabla\phi|),
\end{split}
\end{equation}
where $C_4, C_5$ may depend on $p$ but not $\alpha$. 

By Cauchy-Schwarz inequality, we have
\begin{equation}\label{cau}
|\nabla u|^2=\left(\sum_i \sqrt{1+\phi_{i\bar i}}\cdot\frac{|u_i|}{\sqrt{1+\phi_{i\bar i}}}\right)^2\leq
(n+\Delta\phi)\cdot |\nabla_\phi u|^2.
\end{equation}
Thus $|\nabla u|\leq C u^{1/2}|\nabla_\phi u|$, and hence we have 
\begin{equation}\label{p3}
\frac 12 C_4 \alpha u^p|\nabla F|^2-C_5 u^{p-1}|\nabla F||\nabla u|+\frac 12 p u^{p-1}|\nabla_\phi u|^2
\geq 0
\end{equation}
if we choose $\alpha$ large enough. Similarly, we have 
\begin{equation}\label{p4}
\frac 12 C_4\alpha u^p|\nabla F|^2-C_5 \alpha u^p |\nabla F||\nabla\phi|+C_5\alpha u^p|\nabla\phi|^2\geq 0.
\end{equation}
Using the above two inequalities, we get  
\[
 \int_M u^{p+\gamma}\leq C_3+C_4\int_M u^p|\nabla\phi|^2.
\]
 Using   Young's Inequality, we get 
 \[
 \int_M u^p|\nabla\phi|^2
 \leq
\frac 12  \int_M u^{p+\gamma}+C_5\int_M |\nabla\phi|^{2(p+\gamma)/\gamma}
 \]
By Proposition~\ref{gradientestimate}, we have
\[
\int_M |\nabla\phi|^{2(p+\gamma)/\gamma}\leq \left(c_1+c_2\|R\|^{(n-1)/2}_{(n-1)^2(p+\gamma)/n}\right).
\]
   Since $(n-1)^2(p+\gamma)/n\leq (p+\gamma)(n-1)$, the theorem follows from the monotonicity of the $L^p$-norm.

\end{proof}

%\begin{theorem}
%Assume that $\|\phi\|_{L^\infty}$ and $\|F\|_{L^\infty}$ are %bounded. If for some $p$, we have
%\[
%\|\nabla\phi\|_{L^{2+\frac {4p}{\gamma}}}+\|R\|_{L^(n-1)%(2p+\gamma)}\leq C,
%\]
%then
%\[
%\int (n+\Delta\phi)^p\leq C.
%\]
%\end{theorem}

\section{$\mathcal C^2$-estimate}
In this section, we shall give the $\mathcal C^2$ and high-order estimates.  This section's constants $C$   and $C_i$ depend on $n$,    $\omega$, and $\eta$, which may differ line by line. But contrary to the previous section, these constants are independent of $p>0$.
\begin{theorem}\label{thmC2} 
For each $n$, 
there exist positive numbers $p_{n}$,  $q_{n}$ (depending only on $n$) and $C$ such that  $\|n+\Delta\phi\|_{\infty} \leq C$. Here $C$ depends on $n, \omega, \eta, \|\phi\|_{\infty}, \|F\|_{\infty}, \|R\|_{p_{n}}$, and $\|n+\Delta \phi\|_{q_{n}}.$
\end{theorem}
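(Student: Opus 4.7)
I would prove this by Moser iteration starting from the $L^{q_n}$ bound. Set $w = e^{-\alpha(F+\lambda\phi)}(n+\Delta\phi) + 1$ for suitable fixed constants $\alpha,\lambda > 0$, so that $w \geq 1$ and $w$ is comparable to $n+\Delta\phi$ up to the (bounded) factors in $F$ and $\phi$. Running the computation from Theorem~\ref{thmW2p} with fixed $\alpha$, together with the usual integration by parts that handles the $\Delta F$ term, one obtains the tested inequality
\[
(p-1)\int_M w^{p-2}|\nabla w|^2_{\omega_\phi}\,\omega_\phi^n + C_1\int_M w^{p-1+\gamma}\,\omega_\phi^n \leq C_2\int_M |R|\,w^p\,\omega_\phi^n + C_3\int_M w^{p-1}\,\omega_\phi^n,
\]
valid for every $p\geq 2$, with $\gamma = n/(n-1)$ and constants depending on $n,\omega,\eta,\|F\|_\infty,\|\phi\|_\infty$ but \emph{not} on $p$.

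The Cauchy--Schwarz estimate~\eqref{cau}, namely $|\nabla v|_\omega^2\leq (n+\Delta\phi)|\nabla v|_{\omega_\phi}^2\leq Cw|\nabla v|_{\omega_\phi}^2$, applied to $v = w^{(p-1)/2}$, converts the $\omega_\phi$-Dirichlet term into an $\omega$-Dirichlet term; combined with the Sobolev inequality on $(M,\omega)$ at the critical exponent $\chi = n/(n-1)$, this produces
\[
\|w\|_{L^{(p-1)\chi}}^{p-1} \leq Cp\left(\int_M |R|\,w^p + \|w\|_{L^{p-1}}^{p-1}\right).
\]
To dispose of the $|R|$ term, I fix $p_n > 2n$ so that the H\"older conjugate $s := p_n/(p_n-1)$ is strictly less than $\chi$. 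For $p$ large enough that $sp \leq (p-1)\chi$, H\"older followed by log-convex interpolation gives
\[
\int_M |R|\, w^p \leq \|R\|_{L^{p_n}}\|w\|_{L^{sp}}^p \leq \|R\|_{L^{p_n}}\|w\|_{L^{p-1}}^{p(1-\theta)}\,\|w\|_{L^{(p-1)\chi}}^{p\theta},
\]
with $\theta = \theta(n,p_n,p) \in (0,1)$ tending, as $p\to\infty$, to the fixed value $n/p_n < 1$. Since $p\theta < p-1$ uniformly in $p$, Young's inequality absorbs the $\|w\|_{L^{(p-1)\chi}}$-factor into the left-hand side and yields the Moser step
\[
\|w\|_{L^{(p-1)\chi}} \leq (Ap)^{c/(p-1)}\,\|w\|_{L^{p-1}}^{1 + O(1/p)},
\]
for an absolute exponent $c=c(n,p_n)$ and a data-dependent constant $A = A(n,\omega,\eta,\|F\|_\infty,\|\phi\|_\infty,\|R\|_{L^{p_n}})$.

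Finally, I iterate along $p_k = q_n\chi^k$, starting at $p = q_n$ chosen large enough for the Moser step to apply (any $q_n > \chi/(\chi-s)+1$ suffices, and this determines the $q_n$ in the statement). Since both $\sum_k \chi^{-k}$ and $\sum_k (\log p_k)\chi^{-k}$ converge, the telescoping product of the per-step constants is finite, and an elementary log-iteration gives $\|w\|_{L^\infty}\leq C\,\|w\|_{L^{q_n}}$, which translates back into the desired bound on $\|n+\Delta\phi\|_\infty$. The principal obstacle is the Young-absorption step: it requires the strict gap $\chi - s > 0$ so that the interpolation exponent $\theta$ stays uniformly away from $1$, and this is precisely what forces $p_n > n$ (more comfortably $p_n > 2n$) in the hypothesis. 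The secondary bookkeeping is checking that the $1+O(1/p)$ exponent on $\|w\|_{L^{p-1}}$ still yields a convergent product when iterated; this holds because the tail correction at step $k$ is $O(\chi^{-k})$.
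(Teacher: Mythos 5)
Your Moser-iteration architecture (Sobolev step at the full exponent $\chi=n/(n-1)$ obtained by absorbing the factor $n+\Delta\phi$ into a power of $w$, then H\"older--interpolation--Young absorption of the $|R|w^p$ term using $p_n>n$, then a convergent telescoping product) is sound, but the starting point is not: the tested inequality you claim, with constants independent of $p$, cannot be obtained by ``running the computation from Theorem~\ref{thmW2p} with fixed $\alpha$''. The obstruction is the term $e^{-\alpha(F+\lambda\phi)}\Delta F$ coming from Yau's differential inequality for $n+\Delta\phi$. After multiplying by $w^{p-1}$ and integrating by parts, it produces the cross term $(p-1)\int e^{-\alpha(F+\lambda\phi)}\langle\nabla F,\nabla w\rangle w^{p-2}$. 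The only good terms available to absorb it are $-\alpha\int e^{-\alpha(F+\lambda\phi)}|\nabla F|^2w^{p-1}$ and $(p-1)\int w^{p-2}|\nabla_\phi w|^2$, and Cauchy--Schwarz then forces either $\alpha\gtrsim p$ --- exactly what the proof of Theorem~\ref{thmW2p} does when it lets $\alpha$ depend on $p$ --- which makes the comparison constant between $w$ and $n+\Delta\phi$ grow exponentially in $p$ and destroys the iteration, or else leaves an unabsorbed term of size $\sim p\int|\nabla F|^2w^{p-1}$. In addition, the integration by parts in \eqref{p2} leaves a term $C\alpha\int w^{p-1}|\nabla\phi|^2$, which in the paper is controlled only through Proposition~\ref{gradientestimate}, i.e.\ at the price of $\|R\|_{(n-1)p}$ --- an integrability requirement that grows with $p$ and is therefore also incompatible with letting $p\to\infty$. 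Your tested inequality silently drops both contributions.

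This is precisely why the paper (following Chen--Cheng) iterates a different quantity, $u=e^{F/2}|\nabla_\phi F|^2_\phi+(n+\Delta\phi)+1$: including $|\nabla_\phi F|^2_\phi$ converts the dangerous second-order-in-$F$ term into $\langle\nabla_\phi F,\nabla_\phi\Delta_\phi F\rangle$, where $\Delta_\phi F=-R+\tr_{\omega_\phi}\eta$ is zeroth order in $F$ and is controlled by $|R|+C(n+\Delta\phi)^{n-1}$, while the Bochner terms generated by $|\nabla_\phi F|^2_\phi$ absorb the Hessian of $F$. The price is the extra factor $(n+\Delta\phi)^{2n-2}$ on the right-hand side, which is exactly why the theorem assumes an a priori bound on $\|n+\Delta\phi\|_{q_n}$ and uses the degenerate Sobolev inequality of Lemma~\ref{sobolev}. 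To keep your test function you would need to explain how to control $\int\Delta F\,w^{p-1}$ with constants polynomial in $p$ and with a fixed integrability exponent for $R$; I do not see how to do this without the gradient term.
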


We start with a Sobolev-type of inequality proved in~\cite{CC}.
\begin{lemma}\label{sobolev}
Let $n$ be the complex dimension of $M$.
Then for any $\epsilon\in (0, \frac{1}{n+1}),$ there exists a constant $C$ depends on $\omega$ and $\epsilon$ such that

$$\|u\|_{\beta}^2\leq C\left(\| n+\Delta\phi\|^2_{\frac{1-\epsilon}{\epsilon}}\int_{M} |\nabla_\phi u|_{\phi}^2 +\|u\|_1^2\right),  $$
where  $\beta=2 \Big(1+\frac{1-(n+1)\epsilon}{n-1+\epsilon}\Big)=\frac{2n(1-\eps)}{n-1+\eps}$.
\end{lemma}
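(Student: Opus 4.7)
The plan is to reduce the estimate to the ordinary Sobolev--Poincaré inequality on $(M,\omega)$ viewed as a Riemannian manifold of real dimension $2n$, converting the background gradient $|\nabla u|$ into the $\omega_\phi$-gradient $|\nabla_\phi u|_\phi$ via a Hölder step. The exponents will be arranged so that the target $\beta$ emerges automatically as the Sobolev conjugate.

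Setting $q=2(1-\eps)$, a direct computation shows that the real-dimensional Sobolev conjugate is $q^{\ast}=\frac{2nq}{2n-q}=\beta$, and the hypothesis $\eps\in(0,1/(n+1))$ is exactly what forces $1<q<2n$. The standard Sobolev--Poincaré inequality on the compact manifold, together with the elementary bound $|\bar u|\le \operatorname{Vol}(M)^{-1}\|u\|_1$ on the mean value, then yields
\[\|u\|_{\beta}\le C\bigl(\|\nabla u\|_{q}+\|u\|_{1}\bigr).\]
To handle $\|\nabla u\|_q$ I would invoke the pointwise comparison $|\nabla u|^{2}\le (n+\Delta\phi)\,|\nabla_\phi u|_\phi^{2}$ (a consequence of $\max_i\lambda_i\le \sum_i\lambda_i=n+\Delta\phi$ after simultaneously diagonalizing $\omega_\phi$ against $\omega$), and apply Hölder's inequality with conjugate exponents $1/\eps$ and $1/(1-\eps)$ to get
\[\int_M|\nabla u|^{2(1-\eps)}\le\int_M (n+\Delta\phi)^{1-\eps}|\nabla_\phi u|_\phi^{2(1-\eps)}\le \|n+\Delta\phi\|_{(1-\eps)/\eps}^{1-\eps}\left(\int_M|\nabla_\phi u|_\phi^2\right)^{1-\eps}.\]
Raising this to the $1/(1-\eps)$-power yields
\[\|\nabla u\|_q^{\,2}\le \|n+\Delta\phi\|_{(1-\eps)/\eps}\int_M|\nabla_\phi u|_\phi^2,\]
and squaring the Sobolev step then produces the desired inequality, initially with only the first power of $\|n+\Delta\phi\|_{(1-\eps)/\eps}$.

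To upgrade to the squared norm in the statement I would use the trivial lower bound $\|n+\Delta\phi\|_{(1-\eps)/\eps}\ge c>0$, which follows from $\int_M(n+\Delta\phi)\,\omega^n=n\operatorname{Vol}(M)$ being a fixed positive constant combined with Hölder; this lets $\|n+\Delta\phi\|_p$ be dominated by $\|n+\Delta\phi\|_p^{2}$ up to a harmless multiplicative factor, which is absorbed into $C$. No substantive obstacle is anticipated: the argument is essentially exponent bookkeeping, the only delicate point being that the single choice $q=2(1-\eps)$ must simultaneously produce the Sobolev conjugate $\beta$ on the left and the prescribed Hölder pairing on the right. The hypothesis $\eps<1/(n+1)$ is precisely what makes both $\beta>2$ (so the estimate is nontrivial) and $(1-\eps)/\eps>n$, the latter being the regime in which the $L^p$-estimate of $n+\Delta\phi$ from Section~3 is available.
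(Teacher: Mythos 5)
Your proposal is correct and follows essentially the same route as the paper: the paper derives your Sobolev--Poincaré step with $q=2(1-\eps)$ from the $W^{1,1}\hookrightarrow L^{2n/(2n-1)}$ inequality by substituting $u^{\frac{2n-1}{2n}\beta}$ and interpolating, and then performs the identical pointwise bound $|\nabla u|^2\le (n+\Delta\phi)|\nabla_\phi u|_\phi^2$ followed by H\"older with exponents $1/\eps$ and $1/(1-\eps)$. Your closing remark on upgrading $\|n+\Delta\phi\|_{(1-\eps)/\eps}$ to its square via the lower bound $\int_M(n+\Delta\phi)\,\omega^n=n\,\mathrm{vol}(M)$ is a detail the paper leaves implicit, and is handled correctly.
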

 
\begin{proof} 
The proof is given in \cite{CC}. For the reader's convenience, we include the argument here.  We have  the following Sobolev inequality
\[
\int_M |u|^{2n/(2n-1)}\leq C\left(\int_M|\nabla u|+\int_M |u|\right)^{\frac{2n}{2n-1}}.
\]
Replacing $u$ by $u^{\frac{2n-1}{2n}\beta}$ in the above inequality,  and by interpolation, we get
 \begin{equation}\label{4}
 \int_M |u|^\beta\leq C\left(\int_M |\nabla u|^{2\alpha}+\left(\int_M |u|\right)^{2\alpha }\right)^{\frac{\beta}{2\alpha}},
 \end{equation}
where $\alpha=1-\eps$.

By Cauchy-Schwarz inequality, we have
\[
|\nabla u|^2=\left(\sum_i \sqrt{1+\phi_{i\bar i}}\cdot\frac{|u_i|}{\sqrt{1+\phi_{i\bar i}}}\right)^2\leq
(n+\Delta\phi)\cdot |\nabla_\phi u|^2.
\]
Thus using~\eqref{4} , we have 

\[
\begin{split}
\left(\int_M | u|^\beta\right)^{\frac{2\alpha}{\beta}}
&\leq C\left( \int_{M} |\nabla u|^{2\alpha} +\left(\int_M u\right)^{2\alpha}\right)\\&\leq C\left(\int_M |\nabla_\phi u|^{2\alpha} (n+\Delta\phi)^{\alpha}
+\left(\int_M |u|\right)^{2\alpha}\right)\\
& \leq C \left(\int_M |\nabla_\phi u|^{2}\right)^{\alpha}\left(\int_M (n+\Delta\phi)^{\frac{\alpha}{1-\alpha}} \right)^{1-\alpha}+C\left(\int_M |u|\right)^{2\alpha}.
\end{split}
\]

\end{proof}

\begin{proof}[Proof of Theorem \ref{thmC2}]
We let
\[
u=e^{F/2}|\nabla_\phi F|^2_\phi+(n+\Delta\phi)+1.
\]
Then by~\cite{CC}*{Equation (4.13)}, we have
\begin{equation}\label{4-1}
\Delta_\phi u\geq -C(n+\Delta\phi)^{n-1} u+2 e^{F/2}\langle \nabla_\phi F, \nabla_\phi\Delta_\phi F\rangle-C|R|u -C.
\end{equation}
Multiplying ~\eqref{4-1} by $u^{2p}$ and integrating by parts and using the fact that $F$ is bounded, we have
\begin{equation}\label{4-2}
\begin{split}
&
2p\int_M u^{2p-1}|\nabla_\phi u|^2\omega_\phi^n\leq C\int_M (n+\Delta\phi)^{n-1} u^{2p+1}\\
&+C\int_M|R| u^{2p+1}+C\int_Mu^{2p}
-2\int_M e^{F/2}\langle \nabla_\phi F, \nabla_\phi\Delta_\phi F\rangle u^{2p}\omega_\phi^n.
\end{split}
\end{equation}

In the above last term, we use the same idea as in the proof of Theorem~\ref{thmW2p} to obtain
\[
\begin{split}
&-\int_M e^{F/2}\langle \nabla_\phi F, \nabla_\phi\Delta_\phi F\rangle u^{2p}\omega_\phi^n\\
&=\int_M e^{F/2}(\Delta_\phi F)^2 u^{2p}\omega_\phi^n+\frac 12\int e^{F/2}(\Delta_\phi F)|\nabla_\phi F|^2 u^{2p}\omega_\phi^n\\
&+2p\int_M e^{F/2}(\Delta_\phi F)\langle \nabla_\phi F,  \nabla_\phi u \rangle u^{2p-1}\omega_\phi^n.
\end{split}
\]
Using the Cauchy-Schwarz inequality, for any $\eps_0>0$, we have 
\begin{equation}\label{4-3}
\begin{split}
&
\int_M e^{F/2}(\Delta_\phi F)\langle \nabla_\phi F,  \nabla_\phi u \rangle u^{2p-1}\omega_\phi^n\\
&\leq C\eps_0^{-1}\int_M 
(\Delta_\phi F)^2 u^{2p}\omega_\phi^n+\eps_0\int_M |\langle \nabla_\phi F, \nabla_\phi u\rangle |^2u^{2p-2}\omega_\phi^n
\\&
\leq C\eps_0^{-1}\int_M 
(\Delta_\phi F)^2 u^{2p}+C\eps_0\int_M |\nabla_\phi u|^2 u^{2p-1}.
\end{split}
\end{equation}As a result, we have 
\[
\begin{split}
&
-\int_M e^{F/2}\langle \nabla_\phi F, \nabla_\phi\Delta_\phi F\rangle u^{2p}\omega_\phi^n
\leq C\eps_0\int_M |\nabla_\phi u|^2 u^{2p-1}\\
&+ C(\eps_0^{-1}+1)\int_M 
(\Delta_\phi F)^2 u^{2p}+C_2p\int_M|\Delta_\phi F|  u^{2p+1}.
\end{split}
\]
By choosing $\eps_0$ small enough, from ~\eqref{4-2}, we have 
\begin{equation}\label{4-4}
\begin{split}
&
p\int_M |\nabla_\phi u|^2u^{2p-1}\leq C_1\int_M (n+\Delta\phi)^{n-1} u^{2p+1}\\
&+C_2\int_M|R| u^{2p+1}+C_3\int_M  (\Delta_\phi F)^2 u^{2p}+C_4p\int_M|\Delta_\phi F|  u^{2p+1}.
\end{split}
\end{equation}
Using the equation~\eqref{eq1}, 
we have
\[
|\Delta_\phi F|\leq |R|+|{\rm Tr}_{\omega_\phi}\eta|\leq |R|+C(n+\Delta\phi)^{n-1}.
\]
Therefore from ~\eqref{4-4}, we obtain
 \[
\int_M |\nabla_\phi u|^2 u^{2p-1}\leq C_1\int_M (n+\Delta\phi)^{2n-2} u^{2p+1}+C_2\int_M(1+|R|^2)u^{2p+1}.
\]

Hence,
\[
p^{-2}\int_M|\nabla_\phi u^{p+\frac 12}|^2\leq C \int_M u^{2p-1}|\nabla_\phi u|^2\leq C\int_M ((n+\Delta\phi)^{2n-2}+1+|R|^2) u^{2p+1}. 
\]

Now 
we fix an  $\epsilon \in (0, \frac{1}{n+1}).$ Let $\beta=2(1+\delta)$, where 
\[
\delta=\frac{1-(n+1)\epsilon}{n-1+\epsilon}
\]
 as in Lemma \ref{sobolev}.
Then we have 
\[
\begin{split}
\|u^{p+\frac{1}{2}}\|_{\beta}^2&\leq C  \| n+\Delta\phi\|_{\frac{1-\epsilon}{\epsilon}}\int_{M} |\nabla_\phi u^{p+\frac{1}{2}}|_{\phi}^2
+C\|u^{p+\frac 12}\|_1^2
\\\leq&Cp^2\| n+\Delta\phi\|_{\frac{1-\epsilon}{\epsilon}} \left(  \int_M( (n+\Delta\phi)^{2n-2}+1+|R|^2)u^{2p+1}\right).
\end{split}
\]

On the other hand, let $2<\theta<\beta$ and let $\theta^*=(1-2\theta^{-1})^{-1}$. Then for any function $H$, by the H\"older Inequality, we have
\[
\int_M H u^{2p+1}\leq \|H\|_{{\theta^*}}\cdot \left(\int_M u^{(2p+1)\frac{\theta}{2}}\right)^{\frac{2}{\theta}}.
\]

In particular, we have

\[
\int_M |R|^2 u^{2p+1}\leq \|R\|_{{2\theta^*}}^2\cdot \left(\int_M u^{(2p+1)\frac{\theta}{2}}\right)^{\frac{2}{\theta}}
\]
and 
\[
\int_M (n+\Delta\phi)^{2n-2} u^{2p+1}\leq \|n+\Delta\phi\|_{(2n-2) \theta^*}^{2n-2}\cdot \left(\int_M u^{(2p+1)\frac{\theta}{2}}\right)^{\frac{2}{\theta}}.
\]
Assuming $\|R\|_{2\theta^*}\leq C$, $\|n+\Delta\phi\|_{(2n-2)\theta^*}+ \|n+\Delta\phi\|_{\frac{1-\epsilon}{\epsilon}}\leq C$, we have
$$\|u^{p+\frac{1}{2}}\|_{\beta}^2\leq C p^2 \|u^{p+\frac{1}{2}}\|_{\theta}^2.$$

This implies that for any $p \geq \frac{1}{2}$, we have 
$$\|u\|_{(p+\frac{1}{2})\beta} \leq (Cp^2)^{\frac{2}{2p+1}}\|u\|_{(p+\frac{1}{2})\theta}.$$
Applying Moser's iteration, one obtains 
$$\|u\|_{\infty} \leq C \|u\|_{\theta}.$$

On the other hand $$\|u\|_{\infty}^{\theta}\leq C\|u\|_{\theta}^{\theta}=\int_M |u|^{\theta} \leq C\|u\|_{\infty}^{\theta-1} \|u\|_{1}. $$
which implies that 
 $$\|u\|_{\infty}^{} \leq C \|u\|_{1}\leq C \int_M (|\nabla_\phi F|^2_\phi+(n+\Delta\phi)+1). $$

Since 
 $$\int_M (n+\Delta\phi)\omega_{\phi}^n=n$$ and 
$$\int_M |\nabla_\phi F|^2_{\phi}\omega_{\phi}^n=-\int_M F \Delta_{\phi}F\leq \int_M |F|(|R|+C(n+\Delta \phi)^{n-1})\leq C,$$
we have
$$\|u\|_{\infty} \leq C.$$

\end{proof}

\begin{remark}
Choosing $\epsilon=\frac{1}{2n+1},$ we get  $q_{n}=4n^2-4.$ On the other hand, Theorem \ref{thmW2p} implies that a bound on $\|R\|_{\frac{(n-1)^2(4n^2-4)}{n}}$ gives a bound on 
$\|n+\Delta \phi\|_{4n^2-4}.$ Therefore, we can show that $C$ in the statement of the Theorem \ref{thmC2} depends on $n, \omega, \|\phi\|_{\infty}, \|F\|_{\infty}, \|R\|_{p_{n}},$ where $p_{n}=\frac{4(n-1)^3(n+1)}{n}.$

One might hope to improve the estimate by lowering $p_{n}.$ However, we have not been able to improve the bound yet.

\end{remark}

Now, the proof of Theorem \ref{mainthm2} is straightforward.

\begin{proof}[Proof of Theorem \ref{mainthm2}]

 Suppose that $\phi$ satisfies the equation \eqref{eq1}.  Then Theorem \ref{mainthm1}, Theorem \ref{thmW2p} and Theorem \ref{thmC2} imply that there exists $p_{n}$ such that $$\|n+\Delta \phi\|_{\infty}\leq C=C(n, \omega, \eta, \|R\|_{p_{n}}).$$
This implies that eigenvalues of $\omega_{\phi}=\omega+\ddbar\phi$ are bounded from above by $C$.
On the other hand, by Theorem \ref{mainthm1}, $\|F\|_{\infty}\leq C.$
Therefore, eigenvalues of $\omega_{\phi}=\omega+\ddbar\phi$ are bounded below by a positive constant that only depends on $n, \omega, \eta, \|R\|_{p_{n}}.$ Hence the equation 

$$\Delta _{\omega_{\phi}} F= -R+{\rm tr}_{\omega_{\phi}}\eta$$
is uniformly elliptic. 
  Therefore, DeGiorgi-Nash-Moser Theorem implies that there exists $\alpha \in (0,1)$ such that $\|F\|_{C^\alpha} \leq C. $ This together with the $\mathcal C^2$ bound on $\phi$, we obtain that $\phi$ is bounded in $\mathcal C^{2,\alpha}$(\cite{W}).

 Hence, Carlderon-Zygmond estimate implies that $F$ is bounded in $W^{2,p_{n}}.$ Now differentiating the Monge-Ampere equation implies that $\phi$ is bounded in $W^{4,p_{n}}.$

\end{proof}

\begin{bibdiv}

\begin{biblist}

\bib{A}{article}{
   author={Aubin, T.},
   title={\'Equations du type Monge-Amp\`ere sur les vari\'et\'es
   k\"ahl\'eriennes compactes},
   language={French, with English summary},
   journal={Bull. Sci. Math. (2)},
   volume={102},
   date={1978},
   number={1},
   pages={63--95},
   }

\bib{CC}{article}{
   author={Chen, Xiuxiong},
author={Cheng, Jingrui }
   title={On the constant scalar curvature Kähler metrics I–Apriori estimates },
   doi={ https://doi.org/10.1090/jams/967}
}
 
\bib{CC2}{article}{
   author={Chen, Xiuxiong},
author={Cheng, Jingrui }
   title={On the constant scalar curvature Kähler metrics II–Existence results },
   doi={ https://doi.org/10.1090/jams/966 }
}

\bib{CDS1}{article}{
   author={Chen, X. X.},
   author={Donaldson, S. K.},
   author={Sun, S.},
   title={K\"ahler-Einstein metrics on Fano manifolds. I: Approximation of
   metrics with cone singularities},
   journal={J. Amer. Math. Soc.},
   volume={28},
   date={2015},
   number={1},
   pages={183--197},}

\bib{CDS2}{article}{
   author={Chen, X. X.},
   author={Donaldson, S. K.},
   author={Sun, S.},
   title={K\"ahler-Einstein metrics on Fano manifolds. II: Limits with cone
   angle less than $2\pi$},
   journal={J. Amer. Math. Soc.},
   volume={28},
   date={2015},
   number={1},
   pages={199--234},}

\bib{CDS3}{article}{
   author={Chen, X. X.},
   author={Donaldson, S. K.},
   author={Sun, S.},
   title={K\"ahler-Einstein metrics on Fano manifolds. III: Limits as cone
   angle approaches $2\pi$ and completion of the main proof},
   journal={J. Amer. Math. Soc.},
   volume={28},
   date={2015},
   number={1},
   pages={235--278},}

\bib{Ko}{article}{
   author={Ko\l odziej, S\l awomir},
   title={The complex Monge-Amp\`ere equation},
   journal={Acta Math.},
   volume={180},
   date={1998},
   number={1},
   pages={69--117},
   issn={0001-5962},
   review={\MR{1618325}},
   doi={10.1007/BF02392879},
}

\bib{T1}{article}{
   author={Tian, G.},
   title={On Calabi's conjecture for complex surfaces with positive first
   Chern class},
   journal={Invent. Math.},
   volume={101},
   date={1990},
   number={1},
   pages={101--172},
   
}

\bib{T2}{article}{
   author={Tian, G.},
   title={K\"ahler-Einstein metrics with positive scalar curvature},
   journal={Invent. Math.},
   volume={130},
   date={1997},
   number={1},
   pages={1--37},
   
}

\bib{T3}{article}{
   author={Tian, G.},
   title={K-stability and K\"{a}hler-Einstein metrics},
   journal={Comm. Pure Appl. Math.},
   volume={68},
   date={2015},
   number={7},
   pages={1085--1156},
 
}
		
\bib{W}{article}{
   author={Wang, Y. },
   title={On the $C^{2,\alpha}$- regularity of the complex Monge-Amp\`ere equation},
   journal={Math. Res. Lett.},
   volume={19},
   date={2012},
   number={4},
   pages={ 939--946},
 
}

\bib{Y1}{article}{
   author={Yau, S. T.},
   title={Calabi's conjecture and some new results in algebraic geometry},
   journal={Proc. Nat. Acad. Sci. U.S.A.},
   volume={74},
   date={1977},
   number={5},
   pages={1798--1799},
   
}

\bib{Y2}{article}{
   author={Yau, S. T.},
   title={On the Ricci curvature of a compact K\"ahler manifold and the
   complex Monge-Amp\`ere equation. I},
   journal={Comm. Pure Appl. Math.},
   volume={31},
   date={1978},
   number={3},
   pages={339--411},
   }

\bib{Y3}{article}{
   author={Yau, S. T.},
   title={Open problems in geometry},   conference={      title={Differential geometry: partial differential equations on  manifolds (Los Angeles, CA, 1990)},   },
   book={      series={Proc. Sympos. Pure Math.},      volume={54},      publisher={Amer. Math. Soc.},      place={Providence, RI},   },
   date={1993},
   pages={1--28},
   }

\end{biblist}
\end{bibdiv}

\end{document}